\newtheorem{thm}{Theorem}[section]
\newtheorem{cor}[thm]{Corollary}
\newtheorem{lem}[thm]{Lemma}
\newtheorem{exam}[thm]{Example}
\numberwithin{equation}{section}
\begin{document}

\title{Representation of the g-Drazin inverse in a Banach algebra}
\author{Huanyin Chen}
\address{
Department of Mathematics\\ Hangzhou Normal University\\ Hang -zhou, China}
\email{<huanyinchen@aliyun.com>}
\author{Marjan Sheibani Abdolyousefi}
\address{Women's University of Semnan (Farzanegan), Semnan, Iran}
\email{<sheibani@fgusem.ac.ir>}

\subjclass[2010]{47L10, 15A09, 32A65.} \keywords{generalized Drazin inverse; additive property; perturbation; Banach algebra.}

\begin{abstract}
Let $\mathcal{A}$ be a complex Banach algebra. An element $a\in \mathcal{A}$ has g-Drazin inverse if there exists $b\in \mathcal{A}$ such that $$b=bab, ab=ba, a-a^2b\in \mathcal{A}^{qnil}.$$ Let $a,b\in \mathcal{A}$ have g-Drazin inverses. If $$ab=\lambda a^{\pi}b^{\pi}bab^{\pi},$$ we prove that
$a+b$ has g-Drazin inverse
and $$(a+b)^d=b^{\pi}a^d+b^da^{\pi}+\sum\limits_{n=0}^{\infty}(b^d)^{n+1}a^na^{\pi}+\sum\limits_{n=0}^{\infty}b^{\pi}(a+b)^nb(a^d)^{n+2}.$$
The main results of Mosic (Bull. Malays. Sci. Soc., {\bf 40}(2017), 1465--1478) is thereby extended to the general case.
Applications to block operator matrices are given.\end{abstract}

\maketitle

\section{Introduction}

Throughout the paper, $\mathcal{A}$ is a complex Banach algebra with an identity and $\lambda$ always stands for a nonzero complex number. The commutant of $a\in \mathcal{A}$ is defined by $comm(a)=\{x\in
\mathcal{A}~|~xa=ax\}$. An element $a$ in $\mathcal{A}$ has g-Drazin inverse provided that there exists $b\in comm(a)$ such that $b=bab$
and $a-a^2b\in \mathcal{A}^{qnil}.$ Here, $\mathcal{A}^{qnil}$ is the set of all quasinilpotents in $\mathcal{A}$, i.e.,
$$\mathcal{A}^{qnil}=\{a\in \mathcal{A}~|~1+ax\in U(\mathcal{A})~\mbox{for
every}~x\in comm(a)\}.$$  For a Banach algebra $\mathcal{A}$ it is well known
 that $$a\in \mathcal{A}^{qnil}\Leftrightarrow
\lim\limits_{n\to\infty}\parallel a^n\parallel^{\frac{1}{n}}=0\Leftrightarrow 1+\lambda a\in \mathcal{A}^{-1}~\mbox{for any}~ \lambda\in {\Bbb C}.$$
Here, $\mathcal{A}^{-1}$ stands for the set of all invertible elements in $\mathcal{A}$. We always use $\mathcal{A}^{d}$ to denote the set of all g-Drazin invertible elements in $\mathcal{A}$. It was proved that $a\in \mathcal{A}^{d}$ if and only if there exists an idempotent $p\in comm(a)$ such that $a+p\in \mathcal{A}^{-1}$ and $ap\in \mathcal{A}^{qnil}$ (see ~\cite[Theorem 4.2]{K}).

The g-Drazin invertiblity of the sum of two  elements has a rich history. It was extensively studied in matrix and operator theory by many authors, e.g., \cite{B, CM, CL, DW, DW2, M1, M, MZ2} and \cite{ZM}. In this paper we study this topic under the weaker conditions and extend the main results of \cite{M}.

Let $a,b\in \mathcal{A}^{d}$ and $$ab=\lambda a^{\pi}b^{\pi}bab^{\pi}.$$ In Section 2, we prove that $a+b$ has generalized Drazin inverse and the explicit formula for $(a+b)^d$ is obtained. This extend the result of \cite[Theorem 2.2]{M}.

Let $x\in \mathcal{A}$, and let $p^2=p\in \mathcal{A}$. Then we have Pierce matrix decomposition $x=pxp+px(1-p)+(1-p)xp+(1-p)x(1-p)$. Set $a=pxp, b=px(1-p), c=(1-p)xp, d=(1-p)x(1-p)$. We use the following matrix version to express the Pierce matrix decomposition of $x$ about the idempotent $p$:
$$x=\left(
\begin{array}{cc}
a&b\\
c&d
\end{array}
\right)_p.$$
In Section 3, we characterize the generalized Drazin inverse of an element based on its pierce decomposition. These extend the result of \cite[Theorem 3.1]{M} to more general setting.

If $a\in \mathcal{A}$ has g-Drazin inverse $a^{d}$. The element $a^{\pi}=1-aa^{d}$ is called the spectral idempotent of $a$.
In the last section we then apply our main results to characterize the generalized Drazin inverse of block operator matrices under various spectral idempotent conditions.

\section{Additive results}

The purpose of this section is to investigate the g-Drazin invertibility of the sum of two elements in a Banach algebra. We start by

\begin{lem} Let $$x=\left(\begin{array}{cc}
a&0\\
c&b
\end{array}
\right)_p~\mbox{or}~\left(\begin{array}{cc}
b&c\\
0&a
\end{array}
\right)_p$$  Then $$x^d=\left(\begin{array}{cc}
a^d&0\\
z&b^d
\end{array}
\right)_p,~\mbox{or}~ \left(\begin{array}{cc}
b^d&z\\
0&a^d
\end{array}
\right)_p,$$ where $$\begin{array}{c}
z=(b^d)^2\big(\sum\limits_{i=0}^{\infty}(b^d)^ica^i\big)a^{\pi}+b^{\pi}\big(\sum\limits_{i=0}^{\infty}b^ic(a^d)^i\big)(a^d)^2-b^dca^d.
\end{array}$$
\end{lem}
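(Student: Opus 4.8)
The plan is to treat the lower-triangular case $x=\left(\begin{smallmatrix} a&0\\ c&b\end{smallmatrix}\right)_p$ and then deduce the upper-triangular one by passing to the idempotent $1-p$, which transposes the array and carries the stated formula to itself (so the same $z$ works). Throughout, $a^d\in p\mathcal{A}p$, $b^d\in(1-p)\mathcal{A}(1-p)$ and $c,z\in(1-p)\mathcal{A}p$, and I write $P=aa^d=a^da$, $Q=bb^d=b^db$ for the core idempotents. I would first record that the two series defining $z$ converge: since $a^{\pi}$ commutes with $a$ and is idempotent, $a^ia^{\pi}=(aa^{\pi})^i$ and $b^{\pi}b^i=(bb^{\pi})^i$ for $i\ge 1$, and $aa^{\pi}$, $bb^{\pi}$ are quasinilpotent; hence the general terms decay faster than any geometric rate while the powers $(b^d)^i$, $(a^d)^i$ grow at most like their spectral radii, so the root test gives convergence. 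Then I would check the three defining axioms for the candidate $w=\left(\begin{smallmatrix} a^d&0\\ z&b^d\end{smallmatrix}\right)_p$.

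For the commuting axiom $xw=wx$, the diagonal blocks agree because $aa^d=a^da$ and $bb^d=b^db$, and the lower-left blocks agree precisely when $z$ solves the Sylvester-type identity $za-bz=ca^d-b^dc$. I would verify this by telescoping: the index shifts produced by $(a^d)^{i+2}a=(a^d)^{i+1}$, $b(b^d)^{i+2}=(b^d)^{i+1}$ and $b^{\pi}b^i\!\cdot\! b=b^{i+1}b^{\pi}$ make all but the boundary terms of the two series cancel in pairs, and the surviving terms recombine through $a^{\pi}+aa^d=p$, $b^{\pi}+bb^d=1-p$ together with $cp=c=(1-p)c$ to give exactly $ca^d-b^dc$.

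For the outer axiom $wxw=w$, the off-diagonal block reduces to showing $zP+Qz+b^dca^d=z$. Here I would use the annihilation relations $a^{\pi}P=0$, $b^{\pi}Q=0$, $a^dP=a^d$, $b^dQ=b^d$: right multiplication by $P$ kills the first series and fixes the second, while left multiplication by $Q$ does the reverse. Writing $z=Z_1+Z_2+Z_3$ for its three summands (with $Z_3=-b^dca^d$), one finds $zP=Z_2+Z_3$ and $Qz=Z_1+Z_3$, so that $zP+Qz+b^dca^d=(Z_2+Z_3)+(Z_1+Z_3)-Z_3=z$.

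Finally, for the spectral axiom a direct computation gives $x-x^2w=\left(\begin{smallmatrix} aa^{\pi}&0\\ M&bb^{\pi}\end{smallmatrix}\right)_p$ with $M=ca^{\pi}-bca^d-b^2z$. Since $aa^{\pi}$ and $bb^{\pi}$ are quasinilpotent, $1-\lambda aa^{\pi}$ and $1-\lambda bb^{\pi}$ are invertible in the corners $p\mathcal{A}p$ and $(1-p)\mathcal{A}(1-p)$ for every $\lambda\in{\Bbb C}$; inverting the lower-triangular array corner-by-corner then shows $1-\lambda(x-x^2w)$ is invertible in $\mathcal{A}$ for all $\lambda$, whence $x-x^2w\in\mathcal{A}^{qnil}$. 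I expect the main obstacle to be the bookkeeping in the two telescoping arguments—keeping the index shifts and boundary terms straight while justifying the rearrangements term by term for the convergent series—rather than the spectral step, which is a clean consequence of the triangular block structure.
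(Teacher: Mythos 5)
Your proposal is correct, but it does not follow the paper's route, because the paper gives no argument for this lemma at all: the proof consists of the single line ``See \cite[Theorem 2.1]{C}''. Your self-contained verification of the three g-Drazin axioms for the candidate $w$ with blocks $a^d$, $0$, $z$, $b^d$ checks out. Writing $z=Z_1+Z_2+Z_3$ with $Z_1=\sum_{i\geq 0}(b^d)^{i+2}ca^ia^{\pi}$, $Z_2=\sum_{i\geq 0}b^{\pi}b^ic(a^d)^{i+2}$ and $Z_3=-b^dca^d$, the telescoping does give $Z_1a-bZ_1=-b^dca^{\pi}$, $Z_2a-bZ_2=b^{\pi}ca^d$ and $Z_3a-bZ_3=-b^dcaa^d+bb^dca^d$, whose sum is $ca^d-b^dc$ as you assert; the absorption relations yield $zP=Z_2+Z_3$ and $Qz=Z_1+Z_3$ exactly as claimed, so $wxw=w$; and $x-x^2w$ is block lower triangular with diagonal $aa^{\pi}$, $bb^{\pi}$, so the corner-by-corner inversion of $1-\lambda(x-x^2w)$ settles the spectral condition. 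Two points deserve explicit care in a written-out version: the spectral idempotents must be read in the corner algebras, i.e.\ $a^{\pi}=p-aa^d$ and $b^{\pi}=(1-p)-bb^d$, so that $a^{\pi}+aa^d=p$ and $b^{\pi}+bb^d=1-p$ (harmless because $cp=c=(1-p)c$); and the root-test convergence argument, which you state correctly, is what licenses the term-by-term rearrangements. Compared with the paper, your approach buys a complete, checkable proof at the cost of the bookkeeping you anticipate, while the paper's citation buys brevity but sends the reader to the reference, where the result is established by essentially the same kind of block splitting and direct verification. Your reduction of the upper-triangular case to the lower-triangular one by passing to the idempotent $1-p$ is also valid and matches how such symmetric statements are usually handled.
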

\begin{proof} See ~\cite[Theorem 2.1]{C}.\end{proof}

\begin{lem} Let $\mathcal{A}$ be a Banach algebra, and let $a,b\in\mathcal{A}^{qnil}$. If $ab=\lambda ba$, then $a+b\in \mathcal{A}^{qnil}$.
\end{lem}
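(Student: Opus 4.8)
The plan is to establish directly that the spectral radius of $a+b$ vanishes, i.e. that $\lim_{n\to\infty}\|(a+b)^n\|^{1/n}=0$, invoking the stated characterization $c\in\mathcal{A}^{qnil}\Leftrightarrow\lim_n\|c^n\|^{1/n}=0$. The first step is to turn the twisted commutation $ab=\lambda ba$ into a reordering rule for monomials. Rewriting it as $ba=\lambda^{-1}ab$, an easy induction shows that any word of length $n$ in the two letters $a,b$ containing $i$ copies of $a$ and $j=n-i$ copies of $b$ equals $\lambda^{-s}a^ib^j$ for some integer $s$ with $0\le s\le ij$, where $s$ is the number of adjacent transpositions needed to sort the word (each swap of an $a$ past a $b$ on its left contributes one factor $\lambda^{-1}$). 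Expanding $(a+b)^n$ as a sum over all $2^n$ words and grouping by the number of $a$'s, the $\binom{n}{i}$ words with $i$ copies of $a$ each contribute a term of the form $\lambda^{-s}a^ib^j$.

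The observation that tames the twisting factors is a choice of orientation according to the size of $\lambda$. If $|\lambda|\ge 1$, then $|\lambda^{-s}|\le 1$ for every $s\ge 0$, so each reordered word has norm at most $\|a^i\|\,\|b^{n-i}\|$, giving $\|(a+b)^n\|\le\sum_{i=0}^n\binom{n}{i}\|a^i\|\,\|b^{n-i}\|$. If instead $|\lambda|\le 1$, I would sort each word into the shape $b^ja^i$ using $ab=\lambda ba$ directly; the accumulated factor is then $\lambda^{s}$ with $s\ge 0$, again of modulus at most $1$, and one obtains the identical estimate with the roles of $a$ and $b$ interchanged. Either way the $\lambda$-powers disappear into a harmless factor bounded by $1$, and we are left with the estimate $\|(a+b)^n\|\le\sum_{i=0}^n\binom{n}{i}\|a^i\|\,\|b^{n-i}\|$.

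Finally I would run the standard quasinilpotent bound on this expression. Fix $\delta>0$; since $\lim_i\|a^i\|^{1/i}=\lim_j\|b^j\|^{1/j}=0$, there are constants $C_a,C_b$ with $\|a^i\|\le C_a\delta^i$ and $\|b^j\|\le C_b\delta^j$ for all $i,j\ge 0$. Substituting gives $\|(a+b)^n\|\le C_aC_b\,\delta^n\sum_{i=0}^n\binom{n}{i}=C_aC_b\,(2\delta)^n$, whence $\limsup_n\|(a+b)^n\|^{1/n}\le 2\delta$. As $\delta>0$ is arbitrary the limit is $0$, so $a+b\in\mathcal{A}^{qnil}$. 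The only genuine obstacle is controlling the twisting factors $\lambda^{\pm s}$, and the device above—pushing the $a$'s or the $b$'s to the front according to whether $|\lambda|\ge 1$ or $|\lambda|\le 1$—is precisely what keeps those factors bounded by $1$ and reduces the problem to the familiar commuting situation.
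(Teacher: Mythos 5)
Your argument is correct and complete. Be aware, though, that the paper does not actually prove this lemma: its ``proof'' is a citation to \cite[Lemma 2.1]{C1} and \cite[Lemma 2.1]{DW2}, the latter being the commuting case $\lambda=1$ and the former the extension to commutativity up to a factor. Your write-up is essentially a self-contained reconstruction of what lies behind those references, and it handles the one genuinely delicate point correctly: when one collects $(a+b)^n$ into normally ordered monomials, the relation $ab=\lambda ba$ produces powers $\lambda^{\pm s}$ with $s$ as large as $i(n-i)$, and these would ruin the estimate if they could have large modulus. The cited proof typically phrases the expansion via $q$-binomial coefficients $\binom{n}{k}_{\lambda}$ and must bound them; your device of sorting each word toward $a^ib^j$ when $|\lambda|\ge 1$ and toward $b^ja^i$ when $|\lambda|\le 1$ guarantees every accumulated factor has modulus at most $1$, reducing everything to the classical bound $\|(a+b)^n\|\le\sum_{i=0}^n\binom{n}{i}\|a^i\|\,\|b^{n-i}\|\le C_aC_b(2\delta)^n$ and hence to spectral radius zero. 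The only cosmetic caveats: the constant $C_a$ must also absorb the finitely many indices $i$ (including $i=0$) before the tail estimate $\|a^i\|\le\delta^i$ kicks in, which you implicitly do, and the inversion count $s$ should be identified as the number of $(b,a)$ pairs out of order, which you state. In short: correct, and more informative than the paper, which outsources the proof entirely.
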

\begin{proof} See~\cite[Lemma 2.1]{C1} and ~\cite[Lemma 2.1]{DW2}.\end{proof}

\begin{lem} Let $\mathcal{A}$ be a Banach algebra, and let $a\in \mathcal{A}^{qnil},b\in \mathcal{A}^{d}$. If $$ab=\lambda b^{\pi}bab^{\pi},$$ then $a+b\in \mathcal{A}^{d}$ and
$$(a+b)^d=\sum\limits_{n=0}^{\infty}(b^d)^{n+1}a^n.$$\end{lem}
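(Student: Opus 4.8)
The plan is to pass to the Pierce decomposition relative to the spectral idempotent of $b$ and reduce everything to the triangular situation handled by Lemma 2.1. Put $p=bb^{d}=1-b^{\pi}\in comm(b)$. Relative to $p$ one has
$$b=\left(\begin{array}{cc}b_{1}&0\\0&b_{2}\end{array}\right)_{p},$$
where $b_{1}=b^{2}b^{d}$ is invertible in the corner $p\mathcal{A}p$ (with inverse $b^{d}$, so $b_{1}^{d}=b^{d}$ and $b_{1}^{\pi}=0$) and $b_{2}=b^{\pi}b=b-b^{2}b^{d}\in\mathcal{A}^{qnil}$ by the definition of the g-Drazin inverse. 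Since the identities $b^{d}bb^{d}=b^{d}$ and $bb^{d}=b^{d}b$ give $pb^{d}=b^{d}p=b^{d}$, the element $b^{d}=pb^{d}p$ lives entirely in the $(1,1)$-corner, and hence each power $(b^{d})^{n}$ is block diagonal with only a $(1,1)$-entry.

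Next I would use the hypothesis to fix the shape of $a$. Writing $a=\left(\begin{array}{cc}a_{1}&a_{3}\\a_{4}&a_{2}\end{array}\right)_{p}$ and expanding $ab=\lambda b^{\pi}bab^{\pi}$ in blocks, the right-hand side is supported only in the $(2,2)$-corner, where it equals $\lambda b_{2}a_{2}$, while $ab$ has blocks $a_{1}b_{1},\,a_{3}b_{2},\,a_{4}b_{1},\,a_{2}b_{2}$. Comparing entries and cancelling the invertible $b_{1}$ yields
$$a_{1}=0,\quad a_{4}=0,\quad a_{3}b_{2}=0,\quad a_{2}b_{2}=\lambda b_{2}a_{2}.$$
Thus $a$ is upper triangular with zero $(1,1)$-block, so $a^{n}$ has $(2,2)$-block $a_{2}^{n}=b^{\pi}a^{n}b^{\pi}$; as $a\in\mathcal{A}^{qnil}$, the bound $\|a_{2}^{n}\|^{1/n}\le\|b^{\pi}\|^{2/n}\|a^{n}\|^{1/n}\to0$ shows $a_{2}\in\mathcal{A}^{qnil}$, viewed in the Banach algebra $b^{\pi}\mathcal{A}b^{\pi}$.

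Finally I would assemble the two earlier lemmas. From $a_{2}b_{2}=\lambda b_{2}a_{2}$ and Lemma 2.2, applied in $b^{\pi}\mathcal{A}b^{\pi}$, I get $a_{2}+b_{2}\in\mathcal{A}^{qnil}$, so $(a_{2}+b_{2})^{d}=0$ and $(a_{2}+b_{2})^{\pi}=b^{\pi}$. Then
$$a+b=\left(\begin{array}{cc}b_{1}&a_{3}\\0&a_{2}+b_{2}\end{array}\right)_{p}$$
is upper triangular, so Lemma 2.1 applies; the terms of $z$ carrying $b_{1}^{\pi}$ and $(a_{2}+b_{2})^{d}$ drop out, leaving $(a+b)^{d}$ with $(1,1)$-entry $b^{d}$, with $(1,2)$-entry $\sum_{i=0}^{\infty}(b^{d})^{i+2}a_{3}(a_{2}+b_{2})^{i}$, and with zero in the remaining entries. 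The one real obstacle is to collapse this off-diagonal series into the stated closed form. The twisted commutation iterates to $a_{2}^{i}b_{2}=\lambda^{i}b_{2}a_{2}^{i}$, so $a_{3}a_{2}^{i}b_{2}=\lambda^{i}a_{3}b_{2}a_{2}^{i}=0$ by $a_{3}b_{2}=0$; an easy induction then gives $a_{3}(a_{2}+b_{2})^{i}=a_{3}a_{2}^{i}$ for all $i$. Substituting and reindexing shows the $(1,2)$-entry equals $\sum_{i=0}^{\infty}(b^{d})^{i+2}a_{3}a_{2}^{i}$, which is precisely the $(1,2)$-block of $\sum_{n=0}^{\infty}(b^{d})^{n+1}a^{n}$; matching the other blocks, and checking convergence by the root test via $\|a^{n}\|^{1/n}\to0$, gives $(a+b)^{d}=\sum_{n=0}^{\infty}(b^{d})^{n+1}a^{n}$.
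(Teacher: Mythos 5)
Your proposal is correct and follows essentially the same route as the paper: split along $p=bb^{d}$, use the hypothesis to kill the $(1,1)$ and $(2,1)$ corners of $a$, apply Lemma 2.2 in the corner $b^{\pi}\mathcal{A}b^{\pi}$ to get $(a_2+b_2)^d=0$, and finish with Lemma 2.1 on the resulting triangular form. The only (harmless) deviations are that you establish quasinilpotence of the $(2,2)$-corner of $a$ by the norm estimate $\|b^{\pi}a^nb^{\pi}\|^{1/n}\to 0$ where the paper invokes Cline's formula, and that you explicitly justify the simplification $a_3(a_2+b_2)^i=a_3a_2^i$, a step the paper uses silently.
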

\begin{proof} Let $p=bb^{d}$. Then we have
$$b=\left(\begin{array}{cc}
b_1&0\\
0&b_2
\end{array}
\right)_p, a=\left(\begin{array}{cc}
a_{1}&a_{2}\\
a_{3}&a_4
\end{array}
\right)_p.$$ Moreover,
$$b^d=\left(\begin{array}{cc}
b_1^{-1}&0\\
0&0
\end{array}
\right)_p~\mbox{and}~b^{\pi}=\left(\begin{array}{cc}
0&0\\
0&1-bb^d
\end{array}
\right)_p.$$ By hypothesis, we have
$$\left(\begin{array}{cc}
a_1b_1&a_2b_2\\
a_3b_1&a_4b_2
\end{array}
\right)_p=ab=\lambda b^{\pi}bab^{\pi}=\left(\begin{array}{cc}
0&0\\
0&\lambda b_2a_4
\end{array}
\right)_p.$$
Then we get $a_1b_1=0$ and $a_3b_1=0$. It follows that $a_1=0$ and $a_3=0$. We easily see that $b_2=b-b^2b^d\in ((1-p)\mathcal{A}(1-p))^{qnil}$.
By hypothesis, $abb^d=0$, and so $a(1-bb^d)=a\in \mathcal{A}^{qnil}$. By using Cline's formula, we see that $(1-bb^d)a(1-bb^d)\in \mathcal{A}^{qnil}$. Hence, $a_4\in ((1-p)\mathcal{A}(1-p))^{qnil}$. Since $a_4b_2=\lambda b_2a_4$, it follows by Lemma 2.2 that
$a_4+b_2\in ((1-p)\mathcal{A}(1-p))^{qnil}$, and so $(a_4+b_2)^d=0$.

Then $$a+b=\left(\begin{array}{cc}
b_1&a_2\\
0&a_4+b_2
\end{array}
\right)_p.$$ In light of Lemma 2.1, we have $$(a+b)^d=\left(\begin{array}{cc}
b_1&a_2\\
0&a_4+b_2
\end{array}
\right)^d=\left(\begin{array}{cc}
b_1^{-1}&z\\
0&0
\end{array}
\right)_p,$$ where $z=\sum\limits_{n=0}^{\infty}b_1^{-(n+2)}a_2a_4^n=\sum\limits_{n=0}^{\infty}(b^d)^{n+2}a^{n+1}b^{\pi}.$ Therefore $$(a+b)^d=b^d+\sum\limits_{n=0}^{\infty}(b^d)^{n+2}a^{n+1}=\sum\limits_{n=0}^{\infty}(b^d)^{n+1}a^n,$$ as asserted.\end{proof}

We are now ready to prove the following.

\begin{thm} Let $\mathcal{A}$ be a Banach algebra, and let $a,b\in \mathcal{A}^{d}$. If $$ab=\lambda a^{\pi}b^{\pi}bab^{\pi},$$ then $a+b\in \mathcal{A}^{d}$ and
$$(a+b)^d=b^{\pi}a^d+b^da^{\pi}+\sum\limits_{n=1}^{\infty}(b^d)^{n+1}a^na^{\pi}+\sum\limits_{n=0}^{\infty}b^{\pi}(a+b)^nb(a^d)^{n+2}.$$\end{thm}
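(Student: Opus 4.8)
The plan is to collapse everything onto a single Pierce decomposition with respect to the spectral idempotent $p = aa^d$ and then invoke Lemmas 2.3 and 2.1. Since $a^d$ commutes with $a$, the idempotent $p$ commutes with $a$, so $a$ is block-diagonal: with $e = 1-p = a^\pi$, set $a_1 = ap \in p\mathcal{A}p$ and $a_2 = aa^\pi \in (e\mathcal{A}e)^{qnil}$, so that $a = \begin{pmatrix} a_1 & 0 \\ 0 & a_2 \end{pmatrix}_p$, where $a_1$ is invertible in the corner $p\mathcal{A}p$ with inverse the $p$-part of $a^d$ (indeed $ap\cdot a^d = a\cdot a(a^d)^2 = aa^d = p$), while $a_2$ is quasinilpotent.

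First I would use the hypothesis to pin down the shape of $b$. Multiplying $ab = \lambda a^{\pi}b^{\pi}bab^{\pi}$ on the left by $p$ and using $pa^{\pi}=0$ gives $a^2a^d\,b = a_1 b = 0$, and since $a_1$ is invertible in the corner this forces the top row of $b$ to vanish, i.e. $pb = 0$. Because $b^d = b(b^d)^2$, the same product yields $pb^d = 0$, so both $b$ and $b^d$ are supported in the lower blocks: $b = \begin{pmatrix} 0 & 0 \\ b_3 & b_4 \end{pmatrix}_p$ and $b^d = \begin{pmatrix} 0 & 0 \\ \beta_3 & \beta_4 \end{pmatrix}_p$. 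A short block computation of $bb^d = b^db$, $b^dbb^d = b^d$ and $b - b^2b^d \in \mathcal{A}^{qnil}$ then shows that $\beta_4 = eb^de$ verifies the three defining identities of the g-Drazin inverse inside $e\mathcal{A}e$, so $b_4 := ebe \in (e\mathcal{A}e)^d$ with $b_4^d = eb^de$ and $b_4^{\pi} = e - b_4b_4^d = eb^{\pi}e$.

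The key conceptual step is to transport the hypothesis into the corner. Taking the $(e,e)$-entry of $ab = \lambda a^{\pi}b^{\pi}bab^{\pi}$, the left side contributes $a_2b_4$; and since $a^{\pi}=e$ already sits on the left, expanding $b^{\pi}bab^{\pi}$ in $p$-blocks (all cross terms drop because $b$ and $b^d$ have zero top row) leaves exactly $b_4^{\pi}b_4a_2b_4^{\pi}$ in the corner. Hence $a_2b_4 = \lambda b_4^{\pi}b_4a_2b_4^{\pi}$, which is precisely the hypothesis of Lemma 2.3 for the pair $a_2 \in (e\mathcal{A}e)^{qnil}$, $b_4 \in (e\mathcal{A}e)^d$; it follows that $a_2+b_4 \in (e\mathcal{A}e)^d$ with $(a_2+b_4)^d = \sum_{n=0}^{\infty}(b_4^d)^{n+1}a_2^n$. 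Since $a+b = \begin{pmatrix} a_1 & 0 \\ b_3 & a_2+b_4 \end{pmatrix}_p$ is block lower-triangular with invertible $(1,1)$-block and g-Drazin invertible $(2,2)$-block, Lemma 2.1 gives $a+b \in \mathcal{A}^d$ together with an explicit block formula for $(a+b)^d$, in which the first off-diagonal series of Lemma 2.1 vanishes because $a_1^{\pi}=0$, leaving $z = (a_2+b_4)^{\pi}\big(\sum_{i\ge0}(a_2+b_4)^ib_3(a^d)^{i+2}\big) - (a_2+b_4)^db_3a^d$.

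It remains to reorganize this block expression into the advertised closed form, and this is where I expect the genuine work. Expanding each of the four stated terms in $p$-blocks, one checks that $b^{\pi}a^d$ supplies the invertible $(1,1)$-block $a_1^{-1}$, that $b^da^{\pi} + \sum_{n\ge1}(b^d)^{n+1}a^na^{\pi}$ reassembles the $(2,2)$-block $\sum_{n\ge0}(b_4^d)^{n+1}a_2^n = (a_2+b_4)^d$, and that the $(2,1)$-block must match the $z$ above. The last matching is the crux: writing $S = \sum_{i\ge0}(a_2+b_4)^ib_3(a^d)^{i+2}$, it reduces to the single identity $[b_4^{\pi}-(a_2+b_4)^{\pi}]S = [b_4^d-(a_2+b_4)^d]b_3a^d$. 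The correction $b_4^{\pi}-(a_2+b_4)^{\pi}$ is controlled by the relations $a_2b_4b_4^d = 0$ and hence $a_2b_4^d = 0$, both obtained by right-multiplying the corner hypothesis by $b_4^d$ and using $b_4^{\pi}b_4^d = 0$; these kill the series $\sum_{m\ge0}a_2(b_4^d)^{m+1}a_2^m$ and collapse the correction to $\sum_{m\ge1}(b_4^d)^ma_2^m$. Managing this bookkeeping, rather than any conceptual difficulty, is the main obstacle, and I would isolate the displayed identity as a separate claim before assembling the final formula.
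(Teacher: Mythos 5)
Your proposal follows essentially the same route as the paper's proof: the Pierce decomposition at $p=aa^d$, killing the top row of $b$ via $a^db=0$, transporting the hypothesis to the corner $a^{\pi}\mathcal{A}a^{\pi}$ so that Lemma 2.3 applies to $a_2+b_2$, and then Lemma 2.1 for the resulting lower-triangular form, followed by the same bookkeeping. The one identity you isolate but leave unverified is settled in the paper by reading off $a_2b_1=\lambda b_2^{\pi}b_1a_1$ from the $(2,1)$-entry of the hypothesis, which yields $b_2^da_2^nb_1=0$ and $b_2^da_2^nb_2=0$ for all $n\geq 1$ and hence makes both sides of your displayed identity vanish term by term.
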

\begin{proof} Let $p=aa^{d}$. Then we have
$$a=\left(\begin{array}{cc}
a_1&0\\
0&a_2
\end{array}
\right)_p, b=\left(\begin{array}{cc}
b_{11}&b_{12}\\
b_{1}&b_2
\end{array}
\right)_p.$$ Since $ab=\lambda a^{\pi}b^{\pi}bab^{\pi},$ we see that
$a^db=(a^d)^2ab=\lambda (a^d)^2a^{\pi}b^{\pi}bab^{\pi}=0$; hence, $b_{11}=b_{12}=0$. Thus we get
$$a=\left(\begin{array}{cc}
a_1&0\\
0&a_2
\end{array}
\right)_p, b=\left(\begin{array}{cc}
0&0\\
b_{1}&b_2
\end{array}
\right)_p.$$ Moreover, $a_2=(1-p)a(1-p)=a-a^2a^{d}\in \mathcal{A}^{qnil}$.
In light of Cline's formula,
$b_2=a^{\pi}ba^{\pi}\in \mathcal{A}^{d}$, and so $b_2\in ((1-p)\mathcal{A}(1-p))^{d}$.
By hypothesis, we have
$$\begin{array}{ll}
\left(
\begin{array}{cc}
0&0\\
a_2b_1&a_2b_2
\end{array}
\right)&=ab=\lambda a^{\pi}b^{\pi}bab^{\pi}\\
&=\lambda \left(
\begin{array}{cc}
0&0\\
b_2^{\pi}b_1a_1-b_2^{\pi}b_2a_2b_2^db_1&b_2^{\pi}b_2a_2b_2^{\pi}
\end{array}
\right),
\end{array}$$ and so $$a_2b_2=\lambda b_2^{\pi}b_2a_2b_2^{\pi}.$$ In view of Lemma 2.3,
$$(a_2+b_2)^d=\sum\limits_{n=0}^{\infty}(b_2^d)^{n+1}a_2^n.$$
By virtue of Lemma 2.1, we have $$(a+b)^d=\left(\begin{array}{cc}
a_1^d&0\\
z&(a_2+b_2)^d
\end{array}
\right)=\left(\begin{array}{cc}
a^d&0\\
z&(a_2+b_2)^d\end{array}
\right),$$ where $$z=\sum\limits_{i=0}^{\infty}(a_2+b_2)^{\pi}(a_2+b_2)^ib_1(a^d)^{i+2}-(a_2+b_2)^db_1a^d.$$
Moreover, we have $a_2b_2^d=(a_2b_2)(b_2^d)^2=(\lambda b_2^{\pi}b_2a_2b_2^{\pi})(b_2^d)^2=0$, and then
$$\begin{array}{lll}
(a_2+b_2)^{\pi}&=&(1-p)-b_2\sum\limits_{n=0}^{\infty}(b_2^d)^{n+1}a_2^n\\
&=&b_2^{\pi}-\sum\limits_{n=0}^{\infty}(b_2^d)^{n+1}a_2^{n+1}.
\end{array}$$

Since $a_2b_1=\lambda (b_2^{\pi}b_1a_1-b_2^{\pi}b_2a_2b_2^db_1)=\lambda b_2^{\pi}b_1a_1$, we see that
$b_2^da_2b_1=b_2^d(\lambda b_2^{\pi}b_1a_1)=0$, and so
$$b_2^da_2^2b_1=(b_2^da_2)=(b_2^da_2)(\lambda b_2^{\pi}b_1a_1)=\lambda (b_2^da_2)\big(1-b_2b_2^{d})b_1a_1\big)=0.$$
By induction, we have $b_2^da_2^nb_1=0 (n\geq 1)$. Likewise, we have $b_2^da_2^nb_2=0 (n\geq 1)$. Therefore
$$(b_2^d)^{k+1}a_2^{k+1}(a_2+b_2)^nb_1=0 (k, n\geq 0).$$ This shows that $$\begin{array}{lll}
z&=&\sum\limits_{i=0}^{\infty}\big(b_2^{\pi}-\sum\limits_{k=0}^{\infty}(b_2^d)^{k+1}a_2^{k+1}\big)(a_2+b_2)^ib_1(a^d)^{i+2}-(a_2+b_2)^db_1a^d\\
&=&\sum\limits_{i=0}^{\infty}b_2^{\pi}(a_2+b_2)^ib_1(a^d)^{i+2}-\sum\limits_{i=0}^{\infty}\sum\limits_{k=0}^{\infty}(b_2^d)^{k+1}a_2^{k+1}(a_2+b_2)^ib_1(a^d)^{i+2}\\
&-&b_2^db_1a^d-\sum\limits_{n=1}^{\infty}(b_2^d)^{n+1}a_2^nb_1a^d\\
&=&\sum\limits_{i=0}^{\infty}b_2^{\pi}(a_2+b_2)^ib_1(a^d)^{i+2}-b_2^db_1a^d.
\end{array}$$
Since $a^db=0$, we check that
$$\begin{array}{c}
\left(\begin{array}{cc}
a^d&0\\
-b_2^db_1a^d&0
\end{array}
\right)=b^{\pi}a^d,\\
\left(\begin{array}{cc}
0&0\\
b_2^{\pi}(a_2+b_2)^{i}b_1(a^d)^{i+2}&0
\end{array}
\right)=b^da^{\pi}+\sum\limits_{i=0}^{\infty}(b^d)^{i+1}a^ia^{\pi}.
\end{array}$$
Moreover, we have
$$\left(\begin{array}{cc}
0&0\\
0&(b_2^d)^{i+1}a_2^i
\end{array}
\right)=(b^d)^{i+1}a^ia^{\pi}.$$
Therefore $$(a+b)^d=b^{\pi}a^d+b^da^{\pi}+\sum\limits_{n=1}^{\infty}(b^d)^{n+1}a^na^{\pi}+\sum\limits_{n=0}^{\infty}b^{\pi}(a+b)^nb(a^d)^{n+2},$$ as asserted.
\end{proof}

\section{Pierce matrix decomposition}

Every element in a Banach algebra can be represented by a Pierce form. It is attractive to investigate the g-Drazin inverse of an element is a Pierce form.
Let $x\in \mathcal{A}$ given by the Pierce form $x=\left(
\begin{array}{cc}
a&b\\
c&d
\end{array}
\right)_p$ for an idempotent $p\in \mathcal{A}$. We now extend the result of \cite[Theorem 3.1]{M} to the following.

\begin{thm} If $$a^{\pi}bc=0, a^{\pi}bd=\lambda ab~\mbox{and}~cb+\lambda^2\sum\limits_{n=1}^{\infty}(d^d)^nca^nb=0,$$ then $x\in \mathcal{A}^d$ and
$$\begin{array}{lll}
x^d&=&\left(
\begin{array}{cc}
a^d&0\\
u&d^d
\end{array}
\right)\\
&+&\sum\limits_{n=0}^{\infty}x^n\left(
\begin{array}{cc}
i_n-\sum\limits_{k=1}^{n}b(d^d)^{k+1}c(a^d)^{n+2-k}&b(d^d)^{n+2}\\
0&0
\end{array}
\right),
\end{array}$$ where
$$u=\sum\limits_{n=0}^{\infty}(d^d)^{n+2}ca^na^{\pi}+\sum\limits_{n=0}^{\infty}d^{\pi}d^nc(a^d)^{n+2}-d^dca^d,$$
$$\begin{array}{lll}
i_n&=&\sum\limits_{k=0}^{\infty}bd^{\pi}d^kc(a^d)^{n+k+3}-bd^dc(a^d)^{n+2}\\
&+&\sum\limits_{k=0}^{\infty}b(d^d)^{n+k+3}ca^ka^{\pi}-b(d^d)^{n+2}ca^d
\end{array}$$ for $n\geq 0.$\end{thm}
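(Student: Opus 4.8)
The plan is to deduce the statement from the additive result Theorem 2.4 by splitting $x$ into a g-Drazin invertible lower-triangular block and a square-zero perturbation. Write $x=N+M$ where
$$N=\left(\begin{array}{cc}a&0\\c&d\end{array}\right)_p,\qquad M=\left(\begin{array}{cc}0&b\\0&0\end{array}\right)_p.$$
Since $a\in p\mathcal{A}p$ and $d\in(1-p)\mathcal{A}(1-p)$ are g-Drazin invertible, Lemma 2.1 gives $N\in\mathcal{A}^d$ with $N^d=\left(\begin{array}{cc}a^d&0\\u&d^d\end{array}\right)_p$, and the displayed $u$ is exactly the specialization of the $z$-formula of Lemma 2.1; this is the first summand of the claimed $x^d$. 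Moreover $M^2=0$, so $M\in\mathcal{A}^{qnil}$, $M^d=0$ and $M^{\pi}=1$.

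Before checking the compatibility condition I would record the key consequence of the second hypothesis: multiplying $a^{\pi}bd=\lambda ab$ on the left by $a^d$ and using $a^da^{\pi}=0$ yields $a^dab=0$, hence $aa^db=0$ and $a^db=0$. This single identity is what makes the various products collapse. I would then verify that $(N,M)$ satisfies the hypothesis of Theorem 2.4, namely $NM=\mu N^{\pi}M^{\pi}MNM^{\pi}$ for the scalar $\mu$ attached to $\lambda$; as $M^{\pi}=1$ this is the single Pierce identity $NM=\mu N^{\pi}MN$. Computing both sides blockwise, the $(1,1)$ entry is the first hypothesis $a^{\pi}bc=0$; the $(2,1)$ entry vanishes automatically once $a^{\pi}bc=0$ and $a^db=0$ are known; the $(1,2)$ entry is the second hypothesis; and the $(2,2)$ entry, after replacing $a^{\pi}bd$ by $\lambda ab$ in the lower-left corner $-(ca^d+du)$ of $N^{\pi}$ and discarding every series annihilated by $a^db=0$, collapses to the third displayed hypothesis. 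In this way the three stated conditions are precisely the entrywise content of the single identity demanded by Theorem 2.4.

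Granting this, Theorem 2.4 applies with $a:=N$ and $b:=M$; because $M^d=0$ and $M^{\pi}=1$ its formula degenerates to
$$x^d=N^d+\sum_{n=0}^{\infty}x^nM(N^d)^{n+2}.$$
It then remains to expand the summand. Using $(N^d)^{m}=\left(\begin{array}{cc}(a^d)^m&0\\u_m&(d^d)^m\end{array}\right)_p$ with $u_m=\sum_{k=0}^{m-1}(d^d)^ku(a^d)^{m-1-k}$, one finds $M(N^d)^{n+2}=\left(\begin{array}{cc}bu_{n+2}&b(d^d)^{n+2}\\0&0\end{array}\right)_p$. The top-right entry is already $b(d^d)^{n+2}$; for the top-left entry I would substitute the formula for $u$ into $bu_{n+2}$ and simplify with the idempotent identities $a^{\pi}(a^d)^j=0$ and $d^{\pi}(d^d)^j=0$ for $j\ge1$. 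These annihilations leave only the two boundary terms of each infinite series together with the diagonal $-d^dca^d$ contribution, which reassemble exactly into $i_n-\sum_{k=1}^{n}b(d^d)^{k+1}c(a^d)^{n+2-k}$, giving the asserted expression for $x^d$.

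The main obstacle is the bookkeeping concentrated in the two verification steps. Matching the $(2,2)$ entry to the infinite-series condition requires carefully tracking the power of $\lambda$ produced when $a^{\pi}bd$ is converted to $\lambda ab$ and confirming that the surviving series is indeed $\sum_{n\ge1}(d^d)^nca^nb$; and extracting $i_n$ from $bu_{n+2}$ requires a reindexing that isolates precisely the four boundary contributions. Both reductions are mechanical once $a^db=0$ and the idempotent annihilation identities are in hand, but this is where the genuine content of the computation resides.
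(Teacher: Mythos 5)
Your proposal follows the paper's proof essentially verbatim: the same splitting $x=y+z$ with $y$ the lower-triangular part and $z$ the square-zero part carrying $b$, the same use of Lemma 2.1 for $y^d$, the same key identity $a^db=0$ extracted from the second hypothesis, and the same entrywise verification that Theorem 2.4 applies, yielding $x^d=y^d+\sum_{n\geq 0}x^nz(y^d)^{n+2}$. You go slightly further than the paper by actually expanding $z(y^d)^{n+2}$ to recover $i_n$ (the paper stops at the unexpanded product), and your caution about tracking the power of $\lambda$ is well placed, since the scalar matching between the $(1,2)$ and $(2,2)$ entries is precisely where the paper's own verification is loosest.
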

\begin{proof} Clearly, we have $x=y+z$, where $$y=\left(
\begin{array}{cc}
a&0\\
c&d
\end{array}
\right), z=\left(
\begin{array}{cc}
0&b\\
0&0
\end{array}
\right).$$
Then $$y^d=\left(
\begin{array}{cc}
a^d&0\\
u&d^d
\end{array}
\right), z^d=\left(
\begin{array}{cc}
0&0\\
0&0
\end{array}
\right),$$ where $$u=\sum\limits_{n=0}^{\infty}(d^d)^{n+2}ca^na^{\pi}+\sum\limits_{n=0}^{\infty}d^{\pi}d^nc(a^d)^{n+2}-d^dca^d.$$
Hence, $$y^{\pi}=\left(
\begin{array}{cc}
a^{\pi}&0\\
-ca^d-du&d^{\pi}
\end{array}
\right), z^{\pi}=I_2.$$
This shows that $$yz=\left(
\begin{array}{cc}
0&ab\\
0&cb
\end{array}
\right), y^{\pi}z^{\pi}zyz^{\pi}=\left(
\begin{array}{cc}
a^{\pi}bc&a^{\pi}bd\\
(-ca^d-du)bc&(-ca^d-du)bd
\end{array}
\right).$$
Since $a^{\pi}bd=\lambda ab$, we have $a^db=(a^d)^2(ab)=\lambda^{-1}(a^d)^2a^{\pi}bd=0.$ Hence, $bc=a^{\pi}bc=0$ and $$cb+\lambda dubd
=cb+\lambda \sum\limits_{n=0}^{\infty}(d^d)^{n+1}ca^nbd=cb+\lambda^2\sum\limits_{n=0}^{\infty}(d^d)^{n+1}ca^{n+1}b=0.$$
Then we compute that $$\begin{array}{c}
a^{\pi}bc=0;\\
(-ca^d-du)bc=0;\\
\lambda a^{\pi}bd=ab;\\
\lambda (-ca^d-du)bd=cb.
\end{array}$$ Therefore we have $yz=\lambda y^{\pi}z^{\pi}zyz^{\pi}.$ In light of Theorem 2.4, we derive
$$\begin{array}{lll}
x^d&=&y^d+\sum\limits_{n=0}^{\infty}x^nz(y^d)^{n+2}\\
&=&\left(
\begin{array}{cc}
a^d&0\\
u&d^d
\end{array}
\right)+\sum\limits_{n=0}^{\infty}x^n\left(
\begin{array}{cc}
0&b\\
0&0
\end{array}
\right)\left(
\begin{array}{cc}
a^d&0\\
u&d^d
\end{array}
\right)^{n+2},
\end{array}$$ as required.\end{proof}

\begin{cor} If $$d^{\pi}cb=0, d^{\pi}ca=\lambda dc~\mbox{and}~bc+\lambda^2\sum\limits_{n=1}^{\infty}(a^d)^nbd^nc=0,$$ then $x\in \mathcal{A}^d$ and
$$\begin{array}{lll}
x^d&=&\left(
\begin{array}{cc}
a^d&u\\
0&d^d
\end{array}
\right)\\
&+&\sum\limits_{n=0}^{\infty}x^n\left(
\begin{array}{cc}
0&0\\
c(a^d)^{n+2}&i_n-\sum\limits_{k=1}^{n}c(a^d)^{k+1}b(d^d)^{n+2-k}
\end{array}
\right),
\end{array}$$ where $$u=\sum\limits_{n=0}^{\infty}(a^d)^{n+2}bd^nd^{\pi}+\sum\limits_{n=0}^{\infty}a^{\pi}a^nb(d^d)^{n+2}-a^dbd^d,$$
$$\begin{array}{lll}
i_n&=&\sum\limits_{k=0}^{\infty}ca^{\pi}a^kb(d^d)^{n+k+3}-ca^db(d^d)^{n+2}\\
&+&\sum\limits_{k=0}^{\infty}c(a^d)^{n+k+3}bd^kd^{\pi}-c(a^d)^{n+2}bd^d
\end{array}$$ for $n\geq 0.$\end{cor}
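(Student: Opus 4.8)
The plan is to deduce the corollary from Theorem 2.4--3.1 by passing to the complementary idempotent $q=1-p$, so that no new analytic work is needed. The key observation is that with respect to $q$ the \emph{same} element $x$ has the Pierce form
$$x=\left(\begin{array}{cc} d&c\\ b&a\end{array}\right)_q,$$
because $qxq=(1-p)x(1-p)=d$, $qx(1-q)=(1-p)xp=c$, $(1-q)xq=px(1-p)=b$ and $(1-q)x(1-q)=pxp=a$. Thus the $q$-decomposition is obtained from the $p$-decomposition by the relabelling $a\mapsto d$, $b\mapsto c$, $c\mapsto b$, $d\mapsto a$, and the whole proof amounts to applying Theorem 3.1 to $x$ in this $q$-form and then translating the answer back.

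First I would verify that the three hypotheses of the corollary are precisely the hypotheses of Theorem 3.1 for the relabelled decomposition. Substituting $a\to d,\ b\to c,\ c\to b,\ d\to a$ turns $a^{\pi}bc=0$ into $d^{\pi}cb=0$, turns $a^{\pi}bd=\lambda ab$ into $d^{\pi}ca=\lambda dc$, and turns $cb+\lambda^2\sum_{n=1}^{\infty}(d^d)^nca^nb=0$ into $bc+\lambda^2\sum_{n=1}^{\infty}(a^d)^nbd^nc=0$. These are exactly the three conditions assumed in the corollary, so Theorem 3.1 applies to $x$ written in its $q$-form, yielding a closed expression for $x^d$ in $q$-coordinates.

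It then remains to transport that expression back to the $p$-representation. Since $x^d$ is intrinsic, only the block coordinates change: a matrix $(m_{ij})_q$ equals $(n_{ij})_p$ with $n_{11}=m_{22}$, $n_{12}=m_{21}$, $n_{21}=m_{12}$, $n_{22}=m_{11}$, that is, simultaneous row and column reversal (the element $x^n$ multiplying each summand is unchanged). Applying the relabelling to the quantities $u$ and $i_n$ of Theorem 3.1 reproduces verbatim the $u$ and $i_n$ stated in the corollary, while the coordinate reversal carries $\left(\begin{smallmatrix} a^d&0\\ u&d^d\end{smallmatrix}\right)_q$ to $\left(\begin{smallmatrix} a^d&u\\ 0&d^d\end{smallmatrix}\right)_p$ and carries each summand matrix to the one appearing in the corollary.

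The only genuine work lies in this last step, and it is purely notational: one must check that the involution $a\leftrightarrow d$, $b\leftrightarrow c$ is applied consistently to every spectral idempotent $(\cdot)^{\pi}$, every $g$-Drazin inverse $(\cdot)^d$, and every summation index, and in particular that it acts correctly inside the finite correction sums $\sum_{k=1}^{n}(\cdots)$ so that the reversed blocks land in the stated positions term by term. The main obstacle is therefore bookkeeping rather than mathematics; once the symmetry $p\leftrightarrow 1-p$ is recorded, the corollary follows immediately from Theorem 3.1.
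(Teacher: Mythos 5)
Your proposal is correct and is essentially the paper's own argument: the paper likewise applies Theorem 3.1 to $\left(\begin{smallmatrix} d&c\\ b&a\end{smallmatrix}\right)_{1-p}$ and transports the result back, merely writing your row-and-column reversal as conjugation by the permutation matrix $\left(\begin{smallmatrix} 0&1\\ 1&0\end{smallmatrix}\right)$. No further comment is needed.
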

\begin{proof} We easily see that $$x=\left(
\begin{array}{cc}
a&b\\
c&d
\end{array}
\right)_p=\left(
\begin{array}{cc}
0&1\\
1&0
\end{array}
\right)\left(
\begin{array}{cc}
d&c\\
b&a
\end{array}
\right)_{1-p}\left(
\begin{array}{cc}
0&1\\
1&0
\end{array}
\right).$$ Applying Theorem 3.1 to $\left(
\begin{array}{cc}
d&c\\
b&a
\end{array}
\right)_{1-p}$, we see that it has g-Drazin inverse and $$\begin{array}{lll}
\left(
\begin{array}{cc}
d&c\\
b&a
\end{array}
\right)^d&=&\left(
\begin{array}{cc}
d^d&0\\
u&a^d
\end{array}
\right)\\
&+&\sum\limits_{n=0}^{\infty}x^n\left(
\begin{array}{cc}
i_n-\sum\limits_{k=1}^{n}c(a^d)^{k+1}b(d^d)^{n+2-k}&c(a^d)^{n+2}\\
0&0
\end{array}
\right),
\end{array}$$ where
$$u=\sum\limits_{n=0}^{\infty}(a^d)^{n+2}bd^nd^{\pi}+\sum\limits_{n=0}^{\infty}a^{\pi}a^nb(d^d)^{n+2}-a^dbd^d,$$
$$\begin{array}{lll}
i_n&=&\sum\limits_{k=0}^{\infty}ca^{\pi}a^kb(d^d)^{n+k+3}-ca^db(d^d)^{n+2}\\
&+&\sum\limits_{k=0}^{\infty}c(a^d)^{n+k+3}bd^kd^{\pi}-c(a^d)^{n+2}bd^d
\end{array}$$ for $n\geq 0.$ Therefore $$\begin{array}{lll}
x^d&=&\left(
\begin{array}{cc}
0&1\\
1&0
\end{array}
\right)\left(
\begin{array}{cc}
d&c\\
b&a
\end{array}
\right)^d\left(
\begin{array}{cc}
0&1\\
1&0
\end{array}
\right)\\
&=&\left(
\begin{array}{cc}
a^d&u\\
0&d^d
\end{array}
\right)\\
&+&\sum\limits_{n=0}^{\infty}x^n\left(
\begin{array}{cc}
0&0\\
c(a^d)^{n+2}&i_n-\sum\limits_{k=1}^{n}c(a^d)^{k+1}b(d^d)^{n+2-k}
\end{array}
\right),
\end{array}$$ as required.\end{proof}

We next generalize ~\cite[Theorem 3.2]{M} to the following.

\begin{thm} If $$a^{\pi}abd^{\pi}=\lambda bd, d^{\pi}cb=(ca^d+du)ab~\mbox{and}~\sum\limits_{n=0}^{\infty}bd^nc(a^d)^n=0,$$ then $x\in \mathcal{A}^d$ and
$$x^d=\left(
\begin{array}{cc}
a^d&(a^d)^2b\\
u&d^d+i
\end{array}
\right),$$ where
$$i=\sum\limits_{n=0}^{\infty}d^{\pi}d^nc(a^d)^{n+3}b-d^dc(a^d)^2b+\sum\limits_{n=0}^{\infty}(d^d)^{n+3}ca^na^{\pi}b-(d^d)^2ca^db.$$\end{thm}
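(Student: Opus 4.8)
The plan is to reuse the splitting from the proof of Theorem 3.1, but to feed the two summands into Theorem 2.4 in the \emph{opposite} order. I would write $x=z+y$ with
$$z=\begin{pmatrix} 0 & b\\ 0 & 0\end{pmatrix},\qquad y=\begin{pmatrix} a & 0\\ c & d\end{pmatrix}.$$
Since $z^2=0$, the element $z$ is nilpotent, so $z^d=0$ and $z^{\pi}=1$; and Lemma 2.1 supplies
$$y^d=\begin{pmatrix} a^d & 0\\ u & d^d\end{pmatrix},\qquad y^{\pi}=\begin{pmatrix} a^{\pi} & 0\\ -ca^d-du & d^{\pi}\end{pmatrix},$$
where $u$ is exactly the quantity already recorded in the statement (and in the proof of Theorem 3.1). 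Note in particular that the $u$ appearing in the second hypothesis is this same lower-left corner of $y^d$.

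The heart of the argument is to show that the three hypotheses are equivalent to the single identity $zy=\lambda z^{\pi}y^{\pi}yzy^{\pi}$, i.e.\ to the hypothesis of Theorem 2.4 applied with first element $z$ and second element $y$ (the reverse of the assignment used for Theorem 3.1). Because $z^{\pi}=1$, I would compute $zy=\begin{pmatrix} bc & bd\\ 0 & 0\end{pmatrix}$ and $yz=\begin{pmatrix} 0 & ab\\ 0 & cb\end{pmatrix}$, form $\lambda y^{\pi}(yz)y^{\pi}$, and match the four Pierce entries. Writing $w=-ca^d-du$ for the lower-left corner of $y^{\pi}$, the bottom row of the target identity collapses to the vanishing of $wab+d^{\pi}cb$, which is precisely the second hypothesis $d^{\pi}cb=(ca^d+du)ab$; the $(1,2)$-entry is governed by the first hypothesis $a^{\pi}abd^{\pi}=\lambda bd$; and the $(1,1)$-entry $bc$ is the one controlled by the series hypothesis $\sum_{n}bd^{n}c(a^d)^{n}=0$. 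Along the way I expect to extract the auxiliary relation $a^dbd=0$ (multiply the first hypothesis on the left by $a^d$ and use $a^da^{\pi}=0$), which is what lets the $a^d$- and $d^d$-tails telescope.

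Once the hypothesis of Theorem 2.4 is in hand, the conclusion is almost immediate. Applying that theorem with $a=z$, $b=y$, and using $z^d=0$ together with $z^2=0$, both infinite series in its formula collapse, leaving
$$x^d=y^d+(y^d)^2z.$$
It then remains to expand $(y^d)^2z$ in Pierce form: the $(1,1)$- and $(2,1)$-entries reproduce $a^d$ and $u$, the $(1,2)$-entry is $(a^d)^2b$, and the $(2,2)$-correction is $(ua^d+d^du)b$. Using $a^{\pi}a^d=0$ and $d^dd^{\pi}=0$ to kill the cross terms in this product, the correction simplifies to exactly the stated $i$, matching the asserted formula.

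I expect the main obstacle to be the entrywise verification in the second step, and within it the $(1,1)$-entry. Unlike the two bottom entries, which vanish cleanly through the second hypothesis, matching $bc$ against $\lambda a^{\pi}ab\,(-ca^d-du)$ forces one to expand $u$ and $du$ as their defining infinite series and to invoke $\sum_{n}bd^{n}c(a^d)^{n}=0$ to cancel the resulting $d^d$- and $a^d$-tails. Keeping careful track of the $\lambda$-powers and of the convergence of these series is where the real care is needed; everything else is bookkeeping entirely parallel to the proof of Theorem 3.1.
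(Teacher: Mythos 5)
Your proposal follows the paper's own proof of this theorem essentially verbatim: the same splitting $x=y+z$ with $z=\left(\begin{smallmatrix}0&b\\0&0\end{smallmatrix}\right)$ nilpotent, the same reversed application of Theorem 2.4 (yielding the constant $\lambda^{-1}$), the same entrywise matching in which the second hypothesis forces $t=(-ca^d-du)ab+d^{\pi}cb=0$ and the series hypothesis handles the $(1,1)$-entry, and the same final reduction $x^d=y^d+(y^d)^2z$ with $(ua^d+d^du)b=i$. One small correction to your sketch: the auxiliary identity that actually makes the $(1,1)$-entry work is $bd^d=0$ (right-multiply $a^{\pi}abd^{\pi}=\lambda bd$ by $d^d$), which upgrades the first hypothesis to $a^{\pi}ab=\lambda bd$ and annihilates the $(d^d)^{n+2}$-tail of $bd^2u$; your relation $a^dbd=0$ is true but is not the one that makes those tails telescope.
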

\begin{proof} Write $x=y+z$ as in Theorem 3.1.
Then $$zy=\left(
\begin{array}{cc}
bc&bd\\
0&0
\end{array}
\right), z^{\pi}y^{\pi}yzy^{\pi}=\left(
\begin{array}{cc}
a^{\pi}ab(-ca^d-du)&a^{\pi}abd^{\pi}\\
t(-ca^d-du)&td^{\pi}
\end{array}
\right),$$ where $t=(-ca^d-du)ab+d^{\pi}cb$. Since $a^{\pi}abd^{\pi}=\lambda bd$, we have $bd^d=0$, and so $a^{\pi}ab=\lambda bd$. By hypothesis, $t=0$. Moreover, we have $$\begin{array}{lll}
\lambda^{-1} a^{\pi}ab(-ca^d-du)&=&-bdca^d-bd^2u\\
&=&-bdca^d-\sum\limits_{n=0}^{\infty}bd^{n+2}c(a^d)^{n+2}\\
&=&bc.
\end{array}$$
Hance, we compute that $$\begin{array}{c}
t(-ca^d-du)=0;\\
td^{\pi}=0;\\
\lambda^{-1} a^{\pi}ab(-ca^d-du)=bc;\\
\lambda^{-1} a^{\pi}abd^{\pi}=bd.
\end{array}$$ Therefore we have $zy=\lambda^{-1}z^{\pi}y^{\pi}yzy^{\pi}.$ In light of Theorem 2.4,, we derive
$$\begin{array}{lll}
x^d&=&y^d+(y^d)^2z\\
&=&\left(
\begin{array}{cc}
a^d&(a^d)^2b\\
u&d^d+ua^db+d^dub
\end{array}
\right),
\end{array}$$ as desired.\end{proof}

As an immediate consequence of Theorem 3.3, we now derive

\begin{cor} If $$d^{\pi}dca^{\pi}=\lambda ca, a^{\pi}bc=(bd^d+au)dc~\mbox{and}~\sum\limits_{n=0}^{\infty}ca^nb(d^d)^n=0,$$ then $x\in \mathcal{A}^d$ and
$$x^d=\left(
\begin{array}{cc}
a^d+i&u\\
(d^d)^2c&d^d
\end{array}
\right),$$ where
$$i=\sum\limits_{n=0}^{\infty}a^{\pi}a^nb(d^d)^{n+3}c-a^db(d^d)^2c+\sum\limits_{n=0}^{\infty}(a^d)^{n+3}bd^nd^{\pi}c-(a^d)^2bd^dc.$$\end{cor}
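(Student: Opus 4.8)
The plan is to obtain Corollary 3.4 from Theorem 3.3 by the very same row-column swap that was used to deduce Corollary 3.2 from Theorem 3.1. Put $P=\left(\begin{smallmatrix}0&1\\1&0\end{smallmatrix}\right)$, so that $P^2=I$ and
$$x=\left(\begin{array}{cc}a&b\\c&d\end{array}\right)_p=P\left(\begin{array}{cc}d&c\\b&a\end{array}\right)_{1-p}P.$$
Since the g-Drazin inverse is invariant under similarity and $P=P^{-1}$, it suffices to compute $\widetilde{x}^{\,d}$ for $\widetilde{x}=\left(\begin{smallmatrix}d&c\\b&a\end{smallmatrix}\right)_{1-p}$ and then set $x^d=P\,\widetilde{x}^{\,d}\,P$, exactly as in Corollary 3.2. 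Equivalently, $\widetilde{x}$ is just the Pierce decomposition of the same element $x$ relative to the idempotent $1-p$, so once $\widetilde{x}^{\,d}$ is known in the $(1-p)$-grading one reads off $x^d$ in the $p$-grading by interchanging the two diagonal entries and interchanging the two off-diagonal entries.

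The first step is to verify that the three hypotheses of Corollary 3.4 are precisely the three hypotheses of Theorem 3.3 applied to $\widetilde{x}$, i.e. read off under the relabelling $(a,b,c,d)\mapsto(d,c,b,a)$. Under this relabelling, $a^{\pi}abd^{\pi}=\lambda bd$ becomes $d^{\pi}dca^{\pi}=\lambda ca$; the relation $d^{\pi}cb=(ca^d+du)ab$ becomes $a^{\pi}bc=(bd^d+au)dc$; and $\sum_{n=0}^{\infty}bd^nc(a^d)^n=0$ becomes $\sum_{n=0}^{\infty}ca^nb(d^d)^n=0$. Here one must check that the auxiliary element $u$ carried into Theorem 3.3 from Theorem 3.1 transforms, under the same relabelling, into the element $u$ displayed in the corollary; but this is precisely the $u$ that already appeared in Corollary 3.2, so no fresh computation is required.

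With the hypotheses in place, Theorem 3.3 gives
$$\widetilde{x}^{\,d}=\left(\begin{array}{cc}d^d&(d^d)^2c\\u&a^d+i\end{array}\right)_{1-p},$$
where the element $i$ is the relabelled form of the $i$ in Theorem 3.3 and is seen to coincide with the $i$ stated in the corollary. Reading this back in the $p$-grading, i.e. conjugating by $P$, swaps the $(1,1)$ entry with the $(2,2)$ entry and the $(1,2)$ entry with the $(2,1)$ entry, yielding
$$x^d=\left(\begin{array}{cc}a^d+i&u\\(d^d)^2c&d^d\end{array}\right)_p,$$
which is the asserted formula. The only real work lies in the relabelling step: confirming that each summation, each exponent, and each placement of $a^d,d^d,a^{\pi},d^{\pi}$ survives the swap in the correct position. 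Since every term is a verbatim substitution into an already-proved identity, I anticipate no genuine difficulty beyond careful index bookkeeping.
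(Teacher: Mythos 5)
Your proposal is correct and follows essentially the same route as the paper: the paper's proof likewise applies Theorem 3.3 to the permuted matrix $\left(\begin{smallmatrix}d&c\\ b&a\end{smallmatrix}\right)$ and conjugates by $\left(\begin{smallmatrix}0&1\\ 1&0\end{smallmatrix}\right)$, exactly as in Corollary 3.2. Your write-up simply makes explicit the relabelling bookkeeping that the paper leaves implicit.
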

\begin{proof} As in the proof of Corollary 3.2, we complete the proof by applying Theorem 3.3, to $\left(
\begin{array}{cc}
d&c\\
b&a
\end{array}
\right)$.\end{proof}

\begin{exam} Let $\mathcal{A}=M_3({\Bbb C}), p=\left(
\begin{array}{cc}
I_3&{\bf 0}_{3\times 3}\\
{\bf 0}_{3\times 3}&{\bf 0}_{3\times 3}
\end{array}
\right)\in M_6({\Bbb C})$, and let $x=\left(
\begin{array}{cc}
a&b\\
c&d
\end{array}
\right)_p\in M_6({\Bbb C})$. Then $$a^{\pi}bc=0, a^{\pi}bd=\lambda ab~\mbox{and}~\sum\limits_{n=0}^{\infty}(d^d)^nca^nb=0,$$ while $$a^{\pi}bc=0, a^{\pi}bd\neq ab~\mbox{and}~\sum\limits_{n=0}^{\infty}(d^d)^nca^nb=0.$$ In this case,
$x^d=0.$\end{exam}
\begin{proof} Let $M=\left(
\begin{array}{cc}
A&B\\
C&D
\end{array}
\right)$, where $$A=D=\left(
\begin{array}{ccc}
0&1&0\\
0&0&1\\
0&0&0
\end{array}
\right),
B=\left(
\begin{array}{ccc}
0&2&0\\
0&0&1\\
0&0&0
\end{array}
\right)~\mbox{and}~C=\left(
\begin{array}{ccc}
0&0&1\\
0&0&0\\
0&0&0
\end{array}
\right).$$
Let $p=\left(
\begin{array}{cc}
I_3&{\bf 0}_{3\times 3}\\
{\bf 0}_{3\times 3}&{\bf 0}_{3\times 3}
\end{array}
\right), a=pMp, b=pM(I_3-p), c=(I_3-p)Mp$ and $d=(I_3-p)M(I_3-p)$. It is easy to see that
$a=\left(
\begin{array}{cc}
A&0\\
0&0
\end{array}
\right), b=\left(
\begin{array}{cc}
0&B\\
0&0
\end{array}
\right)$ and
$c=\left(
\begin{array}{cc}
0&0\\
C&0
\end{array}
\right), d= \left(
\begin{array}{cc}
0&0\\
0&D
\end{array}
\right)$. As $a,d$ are nilpotent, we have $a^d=d^d=0$. Also it is easy to see that $bc=cb=0$ and $bd=2ab$. Then $$a^{\pi}bc=0, a^{\pi}bd=2 ab~\mbox{and}~\sum\limits_{n=0}^{\infty}(d^d)^nca^nb=0,$$ while $$a^{\pi}bc=0, a^{\pi}bd\neq ab~\mbox{and}~\sum\limits_{n=0}^{\infty}(d^d)^nca^nb=0.$$ Hence by Theorem 3.1, $x^d=0$.\end{proof}

\section{Block operator matrices}

In this section, we shall apply our main results to block operator matrices over a Banach algebra $\mathcal{A}$. We have

\begin{thm} Let $M=\left(
  \begin{array}{cc}
    A & B \\
    C & D
  \end{array}
\right)\in M_2(\mathcal{A})$, $A$ and $D$ have g-Drazin inverses. If $BC=0, AB=\lambda A^{\pi}BD$ and $DC=\lambda D^{\pi}CA$, then $M\in M_2(\mathcal{A})^d$ and
$$M^d=\left(
\begin{array}{cc}
A^d&B(D^d)^2\\
C(A^d)^2&D^d
\end{array}
\right)+\sum\limits_{n=1}^{\infty}M^n\left(
\begin{array}{cc}
0&B(D^d)^{n+2}\\
C(A^d)^{n+2}&0
\end{array}
\right).$$\end{thm}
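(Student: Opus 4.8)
The plan is to realize the hypotheses as a special case of the Pierce-decomposition results already proved, specifically Theorem 3.1 (and implicitly Theorem 2.4 which underlies it). I would work inside $\mathcal{A}' := M_2(\mathcal{A})$ and take the idempotent
\begin{equation*}
p = \left(\begin{array}{cc} 1 & 0 \\ 0 & 0 \end{array}\right) \in M_2(\mathcal{A}).
\end{equation*}
With respect to $p$, the matrix $M$ has the Pierce form whose diagonal corners are $A$ and $D$ (viewed as elements $a=A$ and $d=D$ in the corners) and whose off-diagonal corners are $B$ and $C$. Since $A$ and $D$ have g-Drazin inverses in $\mathcal{A}$, the corner elements $a=A$ and $d=D$ have g-Drazin inverses in the respective corner algebras $p\mathcal{A}'p$ and $(1-p)\mathcal{A}'(1-p)$, with $a^d=A^d$, $d^d=D^d$, $a^\pi=A^\pi$, $d^\pi=D^\pi$. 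The entire content of the theorem should then follow by checking that the three hypotheses of Theorem 3.1 hold for these corner data.

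\textbf{Verifying the hypotheses of Theorem 3.1.} The three conditions I must confirm are $a^\pi bc=0$, $a^\pi bd=\lambda ab$, and $cb+\lambda^2\sum_{n=1}^{\infty}(d^d)^n ca^n b=0$, where here $b=B$, $c=C$. The first two are immediate from the stated hypotheses: $BC=0$ gives $a^\pi bc = A^\pi BC = 0$, and $AB=\lambda A^\pi BD$ is exactly $ab=\lambda a^\pi bd$, i.e. $a^\pi bd=\lambda^{-1}ab$ — here I should be careful to match the normalization, since Theorem 3.1 is stated with $a^\pi bd=\lambda ab$ rather than $ab=\lambda a^\pi bd$; I would absorb this by noting that under $AB=\lambda A^\pi BD$ one has (as in the proof of Theorem 3.1) $A^dB=0$, hence $BC=A^\pi BC$ and the relation threads through with the same $\lambda$. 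The third condition is where the hypothesis $DC=\lambda D^\pi CA$ enters: I would use it to show $cb=CB=0$ and that each summand $(d^d)^n ca^n b = (D^d)^n C A^n B$ vanishes. The cleanest route is to establish, by the same inductive argument used in the proof of Theorem 2.4, that $D^d C A^n B=0$ for all $n\ge 0$; indeed $DC=\lambda D^\pi CA$ forces $D^dC\cdot(\text{powers of }A)=0$ because $D^dD^\pi=0$, and combined with $BC=0$ (which yields $CB=0$ after transposition through the symmetric structure) this collapses the entire sum to zero.

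\textbf{Assembling the formula.} Once the three hypotheses are verified, Theorem 3.1 delivers $x^d=M^d$ as $y^d+\sum_{n\ge0}x^n z(y^d)^{n+2}$, and I must show this reduces to the clean stated expression. The key simplification is that the vanishing relations $D^dCA^nB=0$ and $CB=0$ kill the complicated correction terms $i_n$ and the subtracted sums $\sum_{k=1}^n B(D^d)^{k+1}C(A^d)^{n+2-k}$, so the matrix multiplying $x^n$ collapses from the general Theorem~3.1 shape to the antidiagonal form $\left(\begin{smallmatrix} 0 & B(D^d)^{n+2} \\ C(A^d)^{n+2} & 0\end{smallmatrix}\right)$. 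Symmetrically, the diagonal term $y^d$ should reduce to $\left(\begin{smallmatrix} A^d & B(D^d)^2 \\ C(A^d)^2 & D^d\end{smallmatrix}\right)$ after the $u$-term simplifies under $D^dCA^nA^\pi=0$ and $D^\pi D^nC(A^d)^{n+2}$ telescoping against the hypothesis. I expect the main obstacle to be bookkeeping: confirming that $DC=\lambda D^\pi CA$ together with $BC=0$ really forces all the cross terms $D^dCA^nB$, $(D^d)^{k}CA^{k}$ to vanish, so that both the $n=0$ diagonal block and the $n\ge1$ antidiagonal blocks match the asserted closed form exactly; this is the verification that must be carried out carefully rather than asserted, since the symmetry between the $(B,C)$ roles is only apparent and the two hypotheses $AB=\lambda A^\pi BD$ and $DC=\lambda D^\pi CA$ play genuinely dual roles in the two off-diagonal corners.
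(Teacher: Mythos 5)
Your instinct to reduce to an earlier result is right, but the specific reduction to Theorem 3.1 breaks down at its third hypothesis, and this is a genuine gap rather than bookkeeping. After you use $DC=\lambda D^{\pi}CA$ to get $D^dC=(D^d)^2(DC)=\lambda (D^d)^2D^{\pi}CA=0$, the series $\sum_{n\ge 1}(D^d)^nCA^nB$ does vanish, so the condition $cb+\lambda^2\sum_{n\ge1}(d^d)^nca^nb=0$ collapses to $CB=0$. But Theorem 4.1 assumes $BC=0$, not $CB=0$, and in a noncommutative algebra one does not imply the other; there is no ``transposition through the symmetric structure'' available. Concretely, take $A=D=0$ and $B=E_{11}$, $C=E_{21}$ (matrix units) with $\mathcal{A}=M_2({\Bbb C})$: then $BC=0$, $AB=\lambda A^{\pi}BD$ and $DC=\lambda D^{\pi}CA$ all hold trivially, yet $CB=E_{21}\neq 0$, so Theorem 3.1 is not applicable to this $M$ (nor is Corollary 3.2, whose hypothesis $d^{\pi}cb=0$ fails for the same reason). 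A secondary problem, which you defer as ``bookkeeping,'' is that even where Theorem 3.1 applies, its formula places all of the $(2,1)$ data inside the term $u$ of $y^d$ with no $M^n$ prefactor, whereas the asserted formula has $C(A^d)^{n+2}$ multiplied by $M^n$; reconciling these shapes is a further nontrivial rearrangement, not a cancellation of the $i_n$.

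The paper's proof avoids both issues by choosing a different splitting: $M=P+Q$ with $P=\mathrm{diag}(A,D)$ and $Q=\left(\begin{smallmatrix}0&B\\ C&0\end{smallmatrix}\right)$. The hypothesis $BC=0$ is used only to conclude $Q^3=0$, hence $Q^d=0$ and $Q^{\pi}=I_2$, and the two conditions $AB=\lambda A^{\pi}BD$, $DC=\lambda D^{\pi}CA$ assemble entrywise into the single identity $PQ=\lambda P^{\pi}Q^{\pi}QPQ^{\pi}$, which is exactly the hypothesis of Theorem 2.4 with $a=P$, $b=Q$. Since $Q^d=0$ and $Q^{\pi}=I_2$, the formula of Theorem 2.4 reduces at once to $M^d=P^d+\sum_{n\ge0}M^nQ(P^d)^{n+2}$, and $Q(P^d)^{n+2}=\left(\begin{smallmatrix}0&B(D^d)^{n+2}\\ C(A^d)^{n+2}&0\end{smallmatrix}\right)$ gives the stated expression with the $n=0$ term absorbed into the first matrix. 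If you want to salvage your write-up, replace the triangular (Theorem 3.1) splitting by this diagonal/antidiagonal one and invoke Theorem 2.4 directly.
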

\begin{proof} Clearly, we have $M=P+Q$, where
$$P=\left(
\begin{array}{cc}
A&0\\
0&D
\end{array}
\right), Q=\left(
\begin{array}{cc}
0&B\\
C&0
\end{array}
\right).$$ Then we have
$$\begin{array}{c}
P^d=\left(
\begin{array}{cc}
A^d&0\\
0&D^d
\end{array}
\right), Q^3=0, Q^d=0,\\
P^{\pi}=\left(
\begin{array}{cc}
A^{\pi}&0\\
0&D^{\pi}
\end{array}
\right), Q^{\pi}=I_2.
\end{array}$$
By the computation, we have
$$\begin{array}{c}
PQ=\left(
\begin{array}{cc}
0&AB\\
DC&0
\end{array}
\right),\\
QP=\left(
\begin{array}{cc}
0&BD\\
CA&0
\end{array}
\right).
\end{array}$$
Hence we have $$P^{\pi}Q^{\pi}QPQ^{\pi}=\left(
\begin{array}{cc}
0&AA^{\pi}BD^{\pi}\\
D^{\pi}DC&0
\end{array}
\right).$$
By hypothesis, we have $PQ=\lambda P^{\pi}Q^{\pi}QPQ^{\pi}$. In light of Theorem 2.4,
we have $$\begin{array}{lll}
M^d&=&P^d+\sum\limits_{n=0}^{\infty}M^nQ(P^d)^{n+2}\\
&=&\left(
\begin{array}{cc}
A^d&0\\
0&D^d
\end{array}
\right)+\sum\limits_{n=0}^{\infty}M^n\left(
\begin{array}{cc}
0&B(D^d)^{n+2}\\
C(A^d)^{n+2}&0
\end{array}
\right),
\end{array}$$ as required.\end{proof}

\begin{cor} Let $M=\left(
  \begin{array}{cc}
    A & B \\
    C & D
  \end{array}
\right)\in M_2(\mathcal{A})$, $A$ and $D$ have g-Drazin inverses. If $CB=0, AB=\lambda A^{\pi}BD$ and $DC=\lambda D^{\pi}CA$, then $M\in M_2(\mathcal{A})^d$ and
$$M^d=\left(
\begin{array}{cc}
A^d&B(D^d)^2\\
C(A^d)^2&D^d
\end{array}
\right)+\sum\limits_{n=1}^{\infty}M^n\left(
\begin{array}{cc}
0&B(D^d)^{n+2}\\
C(A^d)^{n+2}&0
\end{array}
\right).$$\end{cor}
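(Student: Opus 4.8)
The plan is to deduce the statement from Theorem 4.1 by conjugating with the flip matrix, exactly as Corollary 3.2 was obtained from Theorem 3.1. Set $J=\left(\begin{array}{cc} 0 & 1 \\ 1 & 0 \end{array}\right)$, so that $J=J^{-1}$ and $J^2=I_2$, and put $\widetilde{M}=JMJ=\left(\begin{array}{cc} D & C \\ B & A \end{array}\right)$, which is $M$ with the blocks $A,D$ and $B,C$ interchanged. Since conjugation by an invertible element preserves the g-Drazin inverse, it suffices to compute $\widetilde{M}^d$ and then set $M^d=J\widetilde{M}^dJ$.

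First I would check that the hypotheses transform correctly. Writing $\widetilde{M}=\left(\begin{array}{cc} \widetilde{A} & \widetilde{B} \\ \widetilde{C} & \widetilde{D} \end{array}\right)$ with $\widetilde{A}=D,\ \widetilde{B}=C,\ \widetilde{C}=B,\ \widetilde{D}=A$, the three assumptions of Theorem 4.1 for $\widetilde{M}$ become $\widetilde{B}\widetilde{C}=CB=0$, then $\widetilde{A}\widetilde{B}=\lambda\widetilde{A}^{\pi}\widetilde{B}\widetilde{D}$, i.e. $DC=\lambda D^{\pi}CA$, and finally $\widetilde{D}\widetilde{C}=\lambda\widetilde{D}^{\pi}\widetilde{C}\widetilde{A}$, i.e. $AB=\lambda A^{\pi}BD$. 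These are exactly the three hypotheses of the corollary. Noting $\widetilde{A}^d=D^d$ and $\widetilde{D}^d=A^d$, Theorem 4.1 then gives $\widetilde{M}\in M_2(\mathcal{A})^d$ with $$\widetilde{M}^d=\left(\begin{array}{cc} D^d & C(A^d)^2 \\ B(D^d)^2 & A^d \end{array}\right)+\sum\limits_{n=1}^{\infty}\widetilde{M}^n\left(\begin{array}{cc} 0 & C(A^d)^{n+2} \\ B(D^d)^{n+2} & 0 \end{array}\right).$$

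Finally I would conjugate back via $M^d=J\widetilde{M}^dJ$. Inserting $J^2=I_2$ between $\widetilde{M}^n$ and its coefficient block, the similarity distributes as $J\widetilde{M}^nJ\cdot J(\cdots)J$, and $J\widetilde{M}^nJ=M^n$ because $\widetilde{M}^n=JM^nJ$. Conjugation by $J$ sends $\left(\begin{array}{cc} x & y \\ z & w \end{array}\right)$ to $\left(\begin{array}{cc} w & z \\ y & x \end{array}\right)$, hence interchanges the two diagonal entries and the two off-diagonal entries; applying this to the two blocks above turns them into $\left(\begin{array}{cc} A^d & B(D^d)^2 \\ C(A^d)^2 & D^d \end{array}\right)$ and $\left(\begin{array}{cc} 0 & B(D^d)^{n+2} \\ C(A^d)^{n+2} & 0 \end{array}\right)$ respectively, which reassemble into the claimed formula.

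The only delicate point is this last bookkeeping step: one must distribute the conjugation across the product $\widetilde{M}^n(\cdots)$ and confirm that $B,C$ and $A^d,D^d$ land in the asserted positions rather than the transposed ones. Should one prefer to avoid the flip altogether, a direct repetition of the proof of Theorem 4.1 also works: with $M=P+Q$, $P=\left(\begin{array}{cc} A & 0 \\ 0 & D \end{array}\right)$ and $Q=\left(\begin{array}{cc} 0 & B \\ C & 0 \end{array}\right)$, the identity $PQ=\lambda P^{\pi}Q^{\pi}QPQ^{\pi}$ is verified verbatim, while the hypothesis $CB=0$ (in place of $BC=0$) still yields $Q^2=\left(\begin{array}{cc} BC & 0 \\ 0 & 0 \end{array}\right)$ and $Q^3=0$, so $Q^d=0$ and $Q^{\pi}=I_2$, and Theorem 2.4 delivers the formula.
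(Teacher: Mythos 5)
Your proposal is correct and follows essentially the same route as the paper: conjugating $M$ by the flip matrix $J=\left(\begin{smallmatrix}0&1\\1&0\end{smallmatrix}\right)$ to reduce to Theorem 4.1 applied to $\left(\begin{smallmatrix}D&C\\B&A\end{smallmatrix}\right)$, exactly as Corollary 3.2 is deduced from Theorem 3.1. Your verification that the hypotheses permute correctly and your bookkeeping of the conjugated blocks (as well as the alternative direct argument via $Q^3=0$) are accurate and merely more explicit than the paper's one-line proof.
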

\begin{proof} It is easy to verify that
$$\left(
  \begin{array}{cc}
    A & B \\
    C & D
  \end{array}
\right)=\left(
  \begin{array}{cc}
    0 & I \\
    I & 0
  \end{array}
\right)\left(
  \begin{array}{cc}
    D & C \\
    B & A
  \end{array}
\right)\left(
  \begin{array}{cc}
    0 & I \\
    I & 0
  \end{array}
\right).$$ Applying Theorem 4.1 to the matrix $\left(
  \begin{array}{cc}
    D & C \\
    B & A
  \end{array}
\right)$, we obtain the result.\end{proof}

It is convenient at this stage to include the following theorem.

\begin{thm} Let $M=\left(
  \begin{array}{cc}
    A & B \\
    C & D
  \end{array}
\right)\in M_2(\mathcal{A})$, $A$ and $D$ have g-Drazin inverses. If $BC=0, BD=\lambda A^{\pi}ABD^{\pi}$ and $CA=\lambda D^{\pi}DCA^{\pi}$, then $M\in M_2(\mathcal{A})^d$ and
$$M^d=\left(
\begin{array}{cc}
2A^d&(A^d)^2B\\
(D^d)^2C&2D^d+(D^d)^3CB
\end{array}
\right).$$\end{thm}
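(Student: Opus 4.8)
The plan is to follow the pattern of Theorem 4.1, writing $M$ as the sum of its diagonal and anti-diagonal parts and feeding the result into the additive formula of Theorem 2.4. Set $M=P+Q$ with
$$P=\left(\begin{array}{cc} A&0\\ 0&D \end{array}\right),\quad Q=\left(\begin{array}{cc} 0&B\\ C&0 \end{array}\right).$$
Then $P^d=\left(\begin{array}{cc} A^d&0\\ 0&D^d \end{array}\right)$ and $P^{\pi}=\left(\begin{array}{cc} A^{\pi}&0\\ 0&D^{\pi} \end{array}\right)$, while $BC=0$ gives $Q^2=\left(\begin{array}{cc} 0&0\\ 0&CB \end{array}\right)$ and hence $Q^3=0$. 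Thus $Q$ is nilpotent, $Q^d=0$ and $Q^{\pi}=I_2$, so both $P$ and $Q$ lie in $M_2(\mathcal{A})^d$.

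First I would check the single multiplicative identity required by Theorem 2.4, taken in the order used in Theorem 3.3. A block computation yields
$$QP=\left(\begin{array}{cc} 0&BD\\ CA&0 \end{array}\right),\quad Q^{\pi}P^{\pi}PQP^{\pi}=\left(\begin{array}{cc} 0&A^{\pi}ABD^{\pi}\\ D^{\pi}DCA^{\pi}&0 \end{array}\right).$$
The hypotheses $BD=\lambda A^{\pi}ABD^{\pi}$ and $CA=\lambda D^{\pi}DCA^{\pi}$ are exactly the statement that $QP=\lambda Q^{\pi}P^{\pi}PQP^{\pi}$; this is the condition of Theorem 2.4 read with $a=Q$ and $b=P$, so $M=Q+P$ is g-Drazin invertible. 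I expect this verification to be the only genuinely substantive step: one must match the four blocks correctly and remember that $Q^{\pi}=I_2$ lets the outer spectral idempotents drop out.

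Finally I would substitute into the formula of Theorem 2.4 and collapse it using $Q^d=0$. With $a=Q$ and $b=P$, the terms $P^{\pi}Q^d$ and $\sum_{n\ge 0}P^{\pi}(Q+P)^nP(Q^d)^{n+2}$ vanish, while the remaining sum $\sum_{n\ge 1}(P^d)^{n+1}Q^n$ terminates at $n=2$ because $Q^3=0$, leaving
$$M^d=P^d+(P^d)^2Q+(P^d)^3Q^2.$$
Reading off the blocks, $(P^d)^2Q$ contributes the off-diagonal entries $(A^d)^2B$ and $(D^d)^2C$, and $(P^d)^3Q^2$ contributes the single lower-right entry $(D^d)^3CB$, while the diagonal comes from $P^d$ itself. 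The delicate point is precisely that $Q$ is $3$-nilpotent rather than $2$-nilpotent, so the term $(P^d)^3Q^2$ cannot be discarded and is exactly what produces the $CB$-correction in the $(2,2)$ block. Before recording the final expression I would pin down the diagonal coefficients against the degenerate case $A,D\in\mathcal{A}^{-1}$, where the hypotheses force $B=C=0$ and $M^d=M^{-1}$, to be certain the $(1,1)$ and $(2,2)$ blocks carry the correct multiples of $A^d$ and $D^d$.
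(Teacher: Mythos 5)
Your decomposition $M=P+Q$, the verification that $QP=\lambda Q^{\pi}P^{\pi}PQP^{\pi}$, and the appeal to Theorem 2.4 with $a=Q$, $b=P$ are exactly what the paper does, and that part of your argument is sound. The interesting point is the final evaluation of the series. Your computation
$$M^d=P^dQ^{\pi}+\sum_{n\ge 1}(P^d)^{n+1}Q^n=P^d+(P^d)^2Q+(P^d)^3Q^2=\left(\begin{array}{cc} A^d&(A^d)^2B\\ (D^d)^2C&D^d+(D^d)^3CB\end{array}\right)$$
is the correct specialization of Theorem 2.4 (whose sum starts at $n=1$, the $n=0$ contribution already being accounted for by the term $b^da^{\pi}$). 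The paper instead writes $M^d=P^d+\sum_{n=0}^{\infty}(P^d)^{n+1}Q^n$, which counts $P^d$ twice; that double count is the sole source of the coefficients $2A^d$ and $2D^d$ in the printed statement. So your derivation does not prove the statement as displayed --- it proves the corrected version with $A^d$ and $D^d+(D^d)^3CB$ on the diagonal --- and you should say so explicitly rather than leave the reconciliation to the promised ``pin down the coefficients'' step.

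That sanity check, if you carry it out, settles the matter in your favour: when $A,D\in\mathcal{A}^{-1}$ the hypotheses force $B=C=0$ and $M^d=M^{-1}=\mathrm{diag}(A^{-1},D^{-1})$, with coefficient $1$. The paper's own Example 4.5 gives independent confirmation: there $D^d=0$ and $(A^d)^2B=0$, so the claimed inverse is $X=\mathrm{diag}(2A^d,0)$, yet $XMX=4\,\mathrm{diag}(A^d,0)\neq X$, so $X$ fails the defining identity $x=xMx$, whereas $\mathrm{diag}(A^d,0)$ satisfies it. In short: your route is the paper's route, your series manipulation is the careful one, and the discrepancy you would uncover is an error in the theorem's stated formula (and in Corollary 4.4, which inherits it), not a gap in your argument.
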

\begin{proof} Construct $P$ and $Q$ as in Theorem 4.1, we have
$$Q^{\pi}P^{\pi}PQP^{\pi}=\left(
  \begin{array}{cc}
    0&A^{\pi}ABD^{\pi}\\
    D^{\pi}DCA^{\pi}& D
  \end{array}
\right).$$
By hypothesis, we have
$$QP=\lambda Q^{\pi}P^{\pi}PQP^{\pi}.$$ According to Theorem 2.4,
we have $$\begin{array}{lll}
M^d&=&P^d+\sum\limits_{n=0}^{\infty}(P^d)^{n+1}Q^n\\
&=&\left(
\begin{array}{cc}
2A^d&(A^d)^2B\\
(D^d)^2C&2D^d+(D^d)^3CB
\end{array}
\right),
\end{array}$$ as required.\end{proof}

\begin{cor} Let $M=\left(
  \begin{array}{cc}
    A & B \\
    C & D
  \end{array}
\right)\in M_2(\mathcal{A})$, $A$ and $D$ have g-Drazin inverses. If $CB=0, BD=\lambda A^{\pi}ABD^{\pi}$ and $CA=\lambda D^{\pi}DCA^{\pi}$, then $M\in M_2(\mathcal{A})^d$ and
$$M^d=\left(
\begin{array}{cc}
2A^d+(A^d)^3BC&(A^d)^2B\\
(D^d)^2C&2D^d
\end{array}
\right).$$\end{cor}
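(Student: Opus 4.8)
The plan is to follow exactly the reduction used in Corollary 4.2, exploiting the symmetry of the problem under interchanging the two diagonal blocks. Writing $S=\left(\begin{array}{cc} 0 & I \\ I & 0 \end{array}\right)$ for the swap matrix, I observe that
$$M=\left(\begin{array}{cc} A & B \\ C & D \end{array}\right)=S\left(\begin{array}{cc} D & C \\ B & A \end{array}\right)S,$$
and since $S^2=I_2$ we have $M^d=SN^dS$ where $N=\left(\begin{array}{cc} D & C \\ B & A \end{array}\right)$. Thus the whole task reduces to computing $N^d$ by Theorem 4.3 and then conjugating back by $S$.

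First I would verify that $N$ satisfies the hypotheses of Theorem 4.3. Labelling the blocks of $N$ as $(A',B',C',D')=(D,C,B,A)$, so that $(A')^d=D^d$, $(D')^d=A^d$, $(A')^{\pi}=D^{\pi}$ and $(D')^{\pi}=A^{\pi}$, the three conditions of Theorem 4.3 read $B'C'=CB=0$, then $B'D'=\lambda(A')^{\pi}A'B'(D')^{\pi}$, i.e. $CA=\lambda D^{\pi}DCA^{\pi}$, and finally $C'A'=\lambda(D')^{\pi}D'C'(A')^{\pi}$, i.e. $BD=\lambda A^{\pi}ABD^{\pi}$. These are precisely the three hypotheses of the corollary, so Theorem 4.3 applies and yields
$$N^d=\left(\begin{array}{cc} 2D^d & (D^d)^2C \\ (A^d)^2B & 2A^d+(A^d)^3BC \end{array}\right).$$

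Finally I would conjugate by $S$. Since conjugating a $2\times 2$ block matrix $\left(\begin{array}{cc} \alpha & \beta \\ \gamma & \delta \end{array}\right)$ by $S$ produces $\left(\begin{array}{cc} \delta & \gamma \\ \beta & \alpha \end{array}\right)$, one reads off
$$M^d=SN^dS=\left(\begin{array}{cc} 2A^d+(A^d)^3BC & (A^d)^2B \\ (D^d)^2C & 2D^d \end{array}\right),$$
which is the asserted formula. I expect no genuine obstacle here: the only thing to watch is the bookkeeping in the relabelling, making sure each spectral idempotent is paired with the correct block so that the three matched hypotheses line up and the output of Theorem 4.3 is transported correctly under the swap. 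Because all the data are symmetric under the simultaneous interchange $A\leftrightarrow D$, $B\leftrightarrow C$, no computation beyond this relabelling is required.
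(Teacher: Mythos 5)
Your proposal is correct and is essentially the paper's own argument: the paper likewise proves Corollary 4.4 by applying Theorem 4.3 to the swapped matrix $\left(\begin{array}{cc} D & C \\ B & A \end{array}\right)$ and transporting the result back, exactly as in the proof of Corollary 4.2. Your careful relabelling check of the three hypotheses and the explicit conjugation by $S$ simply spell out what the paper leaves implicit.
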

\begin{proof} Applying Theorem 4.3, to the matrix $\left(
  \begin{array}{cc}
    D & C \\
    B & A
  \end{array}
\right)$, we complete the proof as in Theorem 4.3.\end{proof}

Finally, we give a numerical example to illustrate Theorem 4.3.

\begin{exam}
Let $M=\left(
  \begin{array}{cc}
    A&B\\
    C&D
  \end{array}
\right)\in M_4({\Bbb C})$, where $$A=\left(
\begin{array}{ccc}
0&1&0\\
0&0&-1\\
0&0&1
\end{array}
\right),
B=\left(
\begin{array}{ccc}
0&2&0\\
0&0&1\\
0&0&0
\end{array}
\right), $$ $$C=\left(
\begin{array}{ccc}
0&1&1\\
0&0&0\\
0&0&0
\end{array}
\right),
D=\left(
\begin{array}{ccc}
0&i&0\\
0&0&i\\
0&0&0
\end{array}
\right).$$
Then $BC=0, BD=2i A^{\pi}ABD^{\pi}$ and $CA=2i D^{\pi}DCA^{\pi}$, and $$M^d=\left(
\begin{array}{cccccc}
0&0&-2&0&0&0\\
0&0&-2&0&0&0\\
0&0&2&0&0&0\\
0&0&0&0&0&0\\
0&0&0&0&0&0\\
0&0&0&0&0&0
\end{array}
\right).$$ In this case, $BD\neq A^{\pi}ABD^{\pi}$.
\end{exam}
\begin{proof} As $D^3=B^3=C^2=0$, these are nilpotent matrices and so $B^d=C^d=D^d=\left(
\begin{array}{ccc}
0&0&0\\
0&0&0\\
0&0&0
\end{array}
\right)$. Then $B^{\pi}=C^{\pi}=D^{\pi}=I_3$. By direct computation, we get $$A^d=\left(
\begin{array}{ccc}
0&0&-1\\
0&0&-1\\
0&0&1
\end{array}
\right), A^{\pi}=\left(
\begin{array}{ccc}
1&0&1\\
0&1&1\\
0&0&0
\end{array}
\right).$$ Also it is easy to see that  $$BD=\left(
\begin{array}{ccc}
0&0&2i\\
0&0&0\\
0&0&0
\end{array}
\right)=2i\left(
\begin{array}{ccc}
0&0&1\\
0&0&0\\
0&0&0
\end{array}
\right)=2i A^{\pi}ABD^{\pi}, $$ $BC=0$, while $BD\neq A^{\pi}ABD^{\pi}$. Moreover, $CA=0=2i D^{\pi}DCA^{\pi}$. Therefore by Theorem 4.3, $$M^d=\left(
\begin{array}{cc}
2A^d&(A^d)^2B\\
(D^d)^2C&2D^d+(D^d)^3CB
\end{array}
\right)=\left(
\begin{array}{cccccc}
0&0&-2&0&0&0\\
0&0&-2&0&0&0\\
0&0&2&0&0&0\\
0&0&0&0&0&0\\
0&0&0&0&0&0\\
0&0&0&0&0&0
\end{array}
\right).$$\end{proof}

\vskip10mm

\end{document}